\newtheorem{thm}{Theorem}[section]
\newtheorem{lem}[thm]{Lemma}
\newtheorem{mainthm}[thm]{Main Theorem}
\theoremstyle{definition}
\numberwithin{equation}{section}
\begin{document}

%%%%% To ease editing, for IMPAN journals add:

\baselineskip=17pt

%%%%%%%%%%%%%%%%

\title{On the Ces\`aro average of the ``Linnik numbers"}

\author{Marco Cantarini\\
Universit\`a di Ferrara\\ 
Dipartimento di Matematica e Informatica\\
Via Machiavelli, 30 \\
44121 Ferrara, Italy\\
E-mail: marco.cantarini@unife.it}

\date{}

\maketitle

%% Classification and key words; note that the 2010 classification is used:

\renewcommand{\thefootnote}{}

\footnote{2010 \emph{Mathematics Subject Classification}: Primary 11P32; Secondary 44A10, 33C10}

\footnote{\emph{Key words and phrases}: Goldbach-type theorems, Laplace transforms, Ces\`aro average.}

\renewcommand{\thefootnote}{\arabic{footnote}}
\setcounter{footnote}{0}

%%%%%%%%

\begin{abstract}
Let $\Lambda$ be the von Mangoldt function and $$r_{Q}\left(n\right)=\sum_{m_{1}+m_{2}^{2}+m_{3}^{2}=n}\Lambda\left(m_{1}\right)$$ be the counting function for the numbers that can be written as sum of a prime and two squares (that we will call “Linnik numbers”, for brevity). Let $N$ a sufficiently large integer. We prove that for $k>3/2$ we have $$\sum_{n\leq N}r_{Q}\left(n\right)\frac{\left(N-n\right)^{k}}{\Gamma\left(k+1\right)}=M\left(N,k\right)+O\left(N^{k+1}\right)$$ where $M\left(N,k\right)$ is essentially a weighted sum, over non-trivial zeros of the Riemann zeta function, of Bessel functions of complex order and real argument. We also prove that with this technique the bound $k>3/2$ is optimal.
\end{abstract}

\section{Introduction}
We continue the recent work of Languasco and Zaccagnini on additive problems with prime summands. In \cite{langzac1} and \cite{langzac2} they study the Ces\`aro weighted explicit formula for the Goldbach numbers (the integers that can be written as sum of two primes) and for the Hardy-Littlewood numbers (the integers that can be written as sum of a prime and a square). In a similar manner, we will study a Ces\`aro weighted explicit formula for the integers that can be written as sum of a prime and two squares. We will obtain an asymptotic formula with a main term and more terms depending explicitly on the zeros of the Riemann zeta function.
The study of these numbers is classical. For example Hardy and Littlewood in \cite{HarLit} studied the number of solutions of the equation \[n=p+a^{2}+b^{2}\]
 and Linnik in \cite{Linnik} derived an asymptotic formula for the
number of representations of these numbers. Similar averages of arithmetical
functions are common in literature, see, e.g., Chandrasekharan - Narasimhan
\cite{ChaNa} and Berndt \cite{Berndt} who built on earlier classical
work. For our work we will need the Bessel functions $J_{v}\left(u\right)$
of complex order $v$ and real argument $u$. For their definition
and main properties we refer to Watson \cite{Wat}, but we recall
that they were introducted by Daniel Bernoulli and they are the canonical
solution of the differential equation
\[
u^{2}\frac{d^{2}J}{du^{2}}+u\frac{dJ}{du}+\left(u^{2}-v^{2}\right)J=0
\]
for any complex number $v$. In particular, equation (8) on page 177
of \cite{Wat} gives the Sonine representation
\begin{equation}
J_{\nu}\left(u\right)=\frac{\left(u/2\right)^{\nu}}{2\pi i}\int_{\left(a\right)}e^{s}s^{-\nu-1}e^{-u^{2}/\left(4s\right)}ds\label{eq:bes}
\end{equation}
where the notation $\int_{\left(a\right)}$ means $\int_{a-i\infty}^{a+i\infty}$. The method we will
use in this additive problem is based on a formula due to Laplace
\cite{Lap}, namely
\begin{equation}
\frac{1}{2\pi i}\int_{\left(a\right)}v^{-s}e^{v}dv=\frac{1}{\Gamma\left(s\right)}\label{eq:Lap}
\end{equation}
with $\textrm{Re}\left(s\right)>0$ and $a>0$ (see, e.g., formula
5.4 (1) on page 238 of \cite{Erd}). As in \cite{langzac2}, we combine
this approach with line integrals with the classical methods dealing
with infinite sum over primes and integers. Similarly as \cite{langzac2}
the problem naturally involves the modular relation for the complex
Jacobi $\theta_{3}$ function; the presence of the Bessel functions
in our statement strictly depends on such modularity relation.

\section{Preliminary definitions and Lemmas}
Let
\[
r_{Q}\left(n\right)=\sum_{m_{1}+m_{2}^{2}+m_{3}^{2}=n}\Lambda\left(m_{1}\right)
\]
and let $J_{v}\left(u\right)$ be the Bessel function of complex order
$v$ and real argument $u$.
Let $z=a+iy$, $a>0$, and 
\begin{align}
\theta_{3}\left(z\right)= & \sum_{m\in\mathbb{Z}}e^{-m^{2}z},\label{eq:rq}\\
\widetilde{S}\left(z\right)= & \sum_{m\geq1}\Lambda\left(m\right)e^{-mz},\label{eq:stild}\\
\omega_{2}\left(z\right)= & \sum_{m\geq1}e^{-m^{2}z},\label{eq:om2}
\end{align}
and we can see that 
\begin{equation}
\theta_{3}\left(z\right)=1+2\omega_{2}\left(z\right).\label{eq:omegaetheta}
\end{equation}
Furthermore we have the functional equation (see, for example, the
proposition VI.4.3 of Freitag-Busam \cite{frei} page 340)
\begin{equation}
\theta_{3}\left(z\right)=\left(\frac{\pi}{z}\right)^{1/2}\theta_{3}\left(\frac{\pi^{2}}{z}\right),\,\textrm{Re}\left(z\right)>0\label{eq:thetafunct}
\end{equation}
and so 

\begin{equation}
\omega_{2}^{2}\left(z\right)=\left(\frac{1}{2}\left(\frac{\pi}{z}\right)^{1/2}-\frac{1}{2}\right)^{2}+\frac{\pi}{z}\omega_{2}^{2}\left(\frac{\pi^{2}}{z}\right)+\left(\left(\frac{\pi}{z}\right)^{1/2}-1\right)\left(\left(\frac{\pi}{z}\right)^{1/2}\omega_{2}\left(\frac{\pi^{2}}{z}\right)\right).\label{eq:omfunct}
\end{equation}
A trivial but important estimate is 
\begin{equation}
\left|\omega_{2}\left(z\right)\right|\leq\omega_{2}\left(a\right)\leq\int_{0}^{\infty}e^{-at^{2}}dt=\frac{\sqrt{\pi}}{2\sqrt{a}}\ll a^{-1/2}.\label{eq:om2est}
\end{equation}
Let us introduce the following
\begin{lem}
Let $z=a+iy,\, a>0$ and $y\in\mathbb{R}$. Then
\begin{equation}
\widetilde{S}\left(z\right)=\frac{1}{z}-\sum_{\rho}z^{-\rho}\Gamma\left(\rho\right)+E\left(a,y\right)\label{eq:stilda}
\end{equation}
where $\rho=\beta+i\gamma$ runs over the non-trivial zeros of $\zeta\left(s\right)$
and
\[
E\left(a,y\right)\ll\left|z\right|^{1/2}\begin{cases}
1, & \left|y\right|\leq a\\
1+\log^{2}\left(\left|y\right|/a\right), & \left|y\right|>a.
\end{cases}
\]

\end{lem}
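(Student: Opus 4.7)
The plan is to represent $\widetilde{S}(z)$ as a Mellin--Barnes integral involving $-\zeta'(s)/\zeta(s)$, to shift the contour past the critical strip, and to extract the main terms from residues while bounding the shifted integral via Stirling together with standard estimates on $\zeta'/\zeta$.

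First I would invoke the inverse Mellin transform of the Gamma function, $e^{-mz}=(2\pi i)^{-1}\int_{(c)}\Gamma(s)(mz)^{-s}\,ds$, valid for any $c>0$ and $\mathrm{Re}(z)>0$ (this is just \eqref{eq:Lap} recast in its standard form). Taking $c>1$ makes $\sum_{m\geq 1}\Lambda(m)m^{-s}=-\zeta'(s)/\zeta(s)$ absolutely convergent, so Fubini yields
\[
\widetilde{S}(z)=\frac{1}{2\pi i}\int_{(c)}\Gamma(s)\,z^{-s}\left(-\frac{\zeta'(s)}{\zeta(s)}\right)ds.
\]

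Next I would shift the contour to $\mathrm{Re}(s)=-1/2$, using a sequence of truncation heights $T_{n}\to\infty$ chosen inside the ordinate gaps of the zeros of $\zeta(s)$ so that the horizontal segments of the rectangle vanish in the limit. Three families of poles are crossed: the simple pole of $-\zeta'/\zeta$ at $s=1$ contributes $\Gamma(1)z^{-1}=1/z$; each non-trivial zero $\rho$ contributes $-\Gamma(\rho)z^{-\rho}$; and the simple pole of $\Gamma(s)$ at $s=0$ contributes $(-\zeta'/\zeta)(0)=\log(2\pi)$, which is an $O(1)$ term absorbed into $E(a,y)$.

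The quantitative heart of the argument is the bound on the shifted integral along $\mathrm{Re}(s)=-1/2$. Writing $s=-1/2+it$, one has $|z^{-s}|=|z|^{1/2}e^{t\arg z}$, and by Stirling $|\Gamma(-1/2+it)|\asymp |t|^{-1}e^{-\pi|t|/2}$ for $|t|\geq 1$. Hence the integrand carries an effective exponential weight $e^{-(\pi/2-|\arg z|)|t|}$ on the side of the real axis where $t$ and $\arg z$ have the same sign, and decays faster on the opposite side. The natural truncation scale is $T\asymp 1/(\pi/2-|\arg z|)$: when $|y|\leq a$ one has $|\arg z|\leq\pi/4$ and $T=O(1)$, while when $|y|>a$ the identity $\pi/2-|\arg z|=\arctan(a/|y|)\asymp a/|y|$ gives $T\asymp |y|/a$. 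Inserting the standard vertical-line bound $\zeta'(s)/\zeta(s)\ll \log^{2}(|t|+2)$ at $\mathrm{Re}(s)=-1/2$ and integrating then produces $O(|z|^{1/2})$ in the first regime and $O(|z|^{1/2}(1+\log^{2}(|y|/a)))$ in the second, matching the claimed form of $E(a,y)$.

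The main technical obstacle I expect is carrying out the contour shift rigorously across the infinitely many non-trivial zeros, since $\zeta'/\zeta$ is unbounded at each $\rho$. This is handled by the usual device of choosing the heights $T_{n}$ inside ordinate gaps where $1/\zeta(s)$ stays polynomially bounded; once that is in place, the extraction of the main terms and the sharp bound on $E(a,y)$ reduce to careful bookkeeping with Stirling and the geometry of $\arg z$.
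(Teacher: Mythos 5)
The paper does not actually prove this lemma: it quotes it from Lemma 1 of \cite{langzac1}, with the error term corrected in \cite{Lang}, and your Mellin--Barnes representation followed by a contour shift to $\mathrm{Re}(s)=-1/2$ (truncating at heights $T_{n}$ in ordinate gaps, collecting the residues at $s=1$, $s=\rho$ and $s=0$) is essentially the argument given there, so the overall route is the right one. One quantitative step needs correction, though: on the line $\mathrm{Re}(s)=-1/2$ you should estimate $\zeta'/\zeta$ via the functional equation, which gives $\zeta'(s)/\zeta(s)\ll\log(|t|+2)$ there; the bound $\log^{2}(|t|+2)$ you invoke is the critical-strip estimate and is both unnecessary and too lossy at $\mathrm{Re}(s)=-1/2$. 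Indeed, with the effective cutoff $T\asymp|y|/a$ coming from the weight $e^{-(\pi/2-|\arg z|)|t|}$, the integral $\int_{1}^{T}t^{-1}\log^{2}(t)\,dt\asymp\log^{3}(T)$ would yield an error term $|z|^{1/2}\bigl(1+\log^{3}(|y|/a)\bigr)$, weaker than the stated $\log^{2}(|y|/a)$ (though still adequate for the application in Section 3 of the paper); the single-log bound gives exactly the claimed estimate. Apart from this, your sketch matches the cited proof.
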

(For a proof see Lemma 1 of \cite{langzac1}. The bound for $E\left(a,y\right)$
has been corrected in \cite{Lang}). So in particular, taking $z=\frac{1}{N}+iy$
we have 
\[
\left|\sum_{\rho}z^{-\rho}\Gamma\left(\rho\right)\right|=\left|\frac{1}{z}-\widetilde{S}\left(z\right)+E\left(\frac{1}{N},y\right)\right|\ll N+\frac{1}{\left|z\right|}+\left|E\left(\frac{1}{N},y\right)\right|
\]
\begin{equation}
\ll\begin{cases}
N, & \left|y\right|\leq1/N\\
N+\left|z\right|^{1/2}\log^{2}\left(2N\left|y\right|\right), & \left|y\right|>1/N.
\end{cases}\label{eq:lemmaspec}
\end{equation}
Now we have to recall that the Prime Number Theorem (PNT) is equivalent,
via Lemma 2.1, to the statement
\[
\widetilde{S}\left(a\right)\sim a^{-1},\textrm{ when }a\rightarrow0^{+}
\]
(see Lemma 9 of \cite{HarLit}). For our purposes it is important
to introduce the Stirling approximation
\begin{equation}
\left|\Gamma\left(x+iy\right)\right|\sim\sqrt{2\pi}e^{-\pi\left|y\right|/2}\left|y\right|^{x-1/2}\label{eq:1}
\end{equation}
(see for example  \S 4.42 of \cite{titfun}) uniformly for $x\in\left[x_{1},x_{2}\right]$,
$x_{1}$ and $x_{2}$ fixed, and the identity 
\begin{equation}
\left|z^{-w}\right|=\left|z\right|^{-\textrm{Re}\left(w\right)}\exp\left(\textrm{Im}\left(w\right)\arctan\left(y/a\right)\right).\label{eq:modzcomplgen}
\end{equation}

We now quote Lemmas 2 and 3 from \cite{langzac1}:
\begin{lem}
Let $\beta+i\gamma$ run over the non-trivial zeros of the Riemann
zeta function and let $\alpha>1$ be a parameter. The series
\[
\sum_{\rho,\,\gamma>0}\gamma^{\beta-1/2}\int_{1}^{\infty}\exp\left(-\gamma\arctan\left(1/u\right)\right)\frac{dy}{u^{\alpha+\beta}}
\]
converges provided that $\alpha>3/2$. For $\alpha\leq3/2$ the series
does not converge. The result remains true if we insert in the integral
a factor $\log^{c}\left(u\right)$, for any fixed $c\geq0$.
\end{lem}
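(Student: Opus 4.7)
The plan is to bound the inner integral uniformly in $\rho=\beta+i\gamma$ as a power of $\gamma$, so that the full double sum collapses to a one-variable series in $\gamma$ which is handled by the Riemann--von Mangoldt formula. (The ``$dy$'' in the integrand is clearly a typographical slip for ``$du$''.)

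First I would record the elementary two-sided estimate
\[
\frac{\pi}{4u}\le\arctan\!\left(\frac{1}{u}\right)\le\frac{1}{u}\qquad(u\ge 1),
\]
which follows from the concavity of $\arctan$ on $[0,1]$ together with $\arctan(x)\le x$. This turns the factor $\exp(-\gamma\arctan(1/u))$ into $\exp(-c\gamma/u)$ with $c=1$ or $c=\pi/4$. The substitution $v=c\gamma/u$ then converts
\[
\int_{1}^{\infty}\exp(-c\gamma/u)\,\frac{du}{u^{\alpha+\beta}}
\]
into the truncated gamma integral
\[
\left(c\gamma\right)^{1-\alpha-\beta}\int_{0}^{c\gamma}e^{-v}v^{\alpha+\beta-2}\,dv.
\]
Since $\alpha>1$ and $0<\beta<1$, the exponent $\alpha+\beta-2>-1$ is bounded away from $-1$, so the truncated integral tends to $\Gamma(\alpha+\beta-1)$, a quantity that stays between two positive constants for $\beta\in(0,1)$. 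Hence the inner integral is $\asymp\gamma^{1-\alpha-\beta}$, uniformly in $\rho$.

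The factor $\gamma^{\beta-1/2}$ in the outer sum exactly cancels the $\beta$-dependence, and convergence of the double series reduces to that of
\[
\sum_{\gamma>0}\gamma^{1/2-\alpha}.
\]
By the Riemann--von Mangoldt formula $N(T)\sim(T/2\pi)\log T$, partial summation shows this series converges iff $1/2-\alpha<-1$, i.e., $\alpha>3/2$, while the lower bound gives divergence for $\alpha\le 3/2$ (the endpoint $\alpha=3/2$ is included because the extra $\log T$ in $N(T)$ makes $\int T^{-1}\log T\,dT$ diverge). Inserting an extra factor $\log^{c}u$ and using $\log u=\log(c\gamma)-\log v$ under the same substitution costs only an additional factor $\log^{c}\gamma$, and since the zero density contributes only a single logarithm, the convergence threshold $\alpha>3/2$ is unchanged.

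The substance of the proof lies entirely in the uniform control of the integral as $\rho$ varies: because $\alpha-1>0$, the Gamma factor $\Gamma(\alpha+\beta-1)$ remains bounded for $\beta\in(0,1)$, so there is no delicate dependence on individual zeros. The (minor) main obstacle I anticipate is verifying divergence precisely at the endpoint $\alpha=3/2$, which requires retaining the logarithm in the zero-counting function rather than merely comparing to the harmonic integral.
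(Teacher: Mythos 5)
Your argument is correct and complete: the two-sided bound $\arctan(1/u)\asymp 1/u$ on $u\ge 1$, the substitution $v=c\gamma/u$ reducing the inner integral to $(c\gamma)^{1-\alpha-\beta}$ times a truncated Gamma integral that is uniformly $\asymp\Gamma(\alpha+\beta-1)\asymp_\alpha 1$, the exact cancellation of $\beta$ against the weight $\gamma^{\beta-1/2}$, and the reduction to $\sum_{\gamma>0}\gamma^{1/2-\alpha}$ via the Riemann--von Mangoldt formula is precisely the standard proof. Note that the present paper does not prove this lemma at all --- it quotes it from Languasco and Zaccagnini's Ces\`aro average of Goldbach numbers paper --- so there is no in-text proof to compare against; your write-up supplies the argument that source gives. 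Two cosmetic remarks: divergence at the endpoint $\alpha=3/2$ already follows from the crude lower bound $N(T)\gg T$ (the harmonic integral diverges without the extra $\log T$), and for the divergence direction with the $\log^c u$ factor one should restrict the integral to, say, $u\ge 2$ so that $\log^c u\ge(\log 2)^c$, as you implicitly do.
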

$\ $
\begin{lem}
Let $\beta+i\gamma$ run over the non-trivial zeros of the Riemann
zeta function, let $z=a+iy,$ $a\in\left(0,1\right)$, $y\in\mathbb{R}$
and $\alpha>1$. We have
\[
\sum_{\rho}\left|\gamma\right|^{\beta-1/2}\int_{\mathbb{Y}_{1}\cup\mathbb{Y}_{2}}\exp\left(\gamma\arctan\left(\frac{y}{a}\right)-\frac{\pi}{2}\left|\gamma\right|\right)\frac{dy}{\left|z\right|^{\alpha+\beta}}\ll_{\alpha}a^{-\alpha}
\]
where $\mathbb{Y}_{1}=\left\{ y\in\mathbb{R}:\,\gamma y\leq0\right\} $
and $\mathbb{Y}_{2}=\left\{ y\in\left[-a,a\right]:\, y\gamma>0\right\} $.
The result remains true if we insert in the integral a factor $\log^{c}\left(\left|y\right|/a\right)$,
for any fixed $c\geq0.$
\end{lem}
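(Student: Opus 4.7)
\emph{Proof plan.} The strategy is to exploit the exponential decay $\exp(-\tfrac{\pi}{2}|\gamma|)$ inherited from Stirling by showing that on both $\mathbb{Y}_1$ and $\mathbb{Y}_2$ the competing factor $\exp(\gamma\arctan(y/a))$ is not large enough to destroy it. Since the resulting gain is uniform in $y$ on each region, the $y$-integral and the $\rho$-sum decouple, and we can finish with a crude zero-counting estimate.

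On $\mathbb{Y}_1$, since $\arctan$ is odd and $\gamma y\le 0$, we have $\gamma\arctan(y/a)\le 0$ pointwise, so
\[
\exp\!\bigl(\gamma\arctan(y/a)-\tfrac{\pi}{2}|\gamma|\bigr)\le\exp\!\bigl(-\tfrac{\pi}{2}|\gamma|\bigr).
\]
On $\mathbb{Y}_2$ we use the restriction $|y|\le a$: then $|\arctan(y/a)|\le\arctan 1=\pi/4$, whence
\[
\gamma\arctan(y/a)-\tfrac{\pi}{2}|\gamma|\le\tfrac{\pi}{4}|\gamma|-\tfrac{\pi}{2}|\gamma|=-\tfrac{\pi}{4}|\gamma|.
\]
Both bounds are independent of $y$; the slower of the two decay rates is $\exp(-\tfrac{\pi}{4}|\gamma|)$.

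For the $y$-integrals, write $|z|^{\alpha+\beta}=(a^{2}+y^{2})^{(\alpha+\beta)/2}$ and substitute $y=at$. On $\mathbb{Y}_1$,
\[
\int_{\mathbb{Y}_1}\frac{dy}{|z|^{\alpha+\beta}}\le\int_{\mathbb{R}}\frac{dy}{(a^{2}+y^{2})^{(\alpha+\beta)/2}}=a^{1-\alpha-\beta}\int_{\mathbb{R}}\frac{dt}{(1+t^{2})^{(\alpha+\beta)/2}}\ll_{\alpha}a^{1-\alpha-\beta},
\]
the $t$-integral converging since $\alpha+\beta>\alpha>1$. On $\mathbb{Y}_2\subset[-a,a]$ the crude bound $|z|\ge a$ gives
\[
\int_{\mathbb{Y}_2}\frac{dy}{|z|^{\alpha+\beta}}\le 2a\cdot a^{-\alpha-\beta}=2a^{1-\alpha-\beta}.
\]
Since $0<\beta\le 1$ and $a\in(0,1)$, one has $a^{1-\alpha-\beta}\le a^{-\alpha}$, so each of the two pieces of the $y$-integral is $O_\alpha(a^{-\alpha})$ uniformly in $\rho$.

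Combining the exponential and $y$-integral estimates, the quantity in question is dominated by
\[
a^{-\alpha}\sum_{\rho}|\gamma|^{\beta-1/2}\exp\!\bigl(-\tfrac{\pi}{4}|\gamma|\bigr),
\]
and this sum converges absolutely because $|\gamma|^{\beta-1/2}\le|\gamma|^{1/2}$ (as $\beta\le 1$) and the number of non-trivial zeros with $|\gamma|\in[T,T+1]$ is $O(\log T)$, so the exponential damping easily dominates the logarithmic density. The optional insertion of $\log^{c}(|y|/a)$ survives the same change of variables $y=at$: it becomes $\log^{c}|t|$, which is integrable on $[-1,1]$ and absorbed by any power of $(1+t^{2})^{-\varepsilon}$ for $|t|\ge 1$, so no loss occurs. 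The only delicate point—and the reason the lemma explicitly restricts to $\mathbb{Y}_1\cup\mathbb{Y}_2$—is that on $\mathbb{Y}_2$ the bound $|y|\le a$ is precisely what keeps $\gamma\arctan(y/a)$ from cancelling the Stirling factor $-\tfrac{\pi}{2}|\gamma|$; the complementary region $\{y\gamma>0,\ |y|>a\}$ loses this exponential gain and is exactly the regime handled by the preceding lemma, where the stronger hypothesis $\alpha>3/2$ becomes necessary.
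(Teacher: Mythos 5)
Your proof is correct, and it follows essentially the same route as the source: the paper itself states this lemma without proof, quoting it from Languasco--Zaccagnini \cite{langzac1}, where the argument is precisely the one you give (the sign of $\gamma\arctan(y/a)$ on $\mathbb{Y}_{1}$, the bound $\left|\arctan(y/a)\right|\leq\pi/4$ on $\mathbb{Y}_{2}$, a uniform-in-$\rho$ bound $\ll_{\alpha}a^{1-\alpha-\beta}\leq a^{-\alpha}$ for the $y$-integral, and convergence of the zero sum from the exponential damping against the $O(\log T)$ zero density). Your closing remark correctly identifies why the complementary region $\{y\gamma>0,\ |y|>a\}$ is excluded and handled by the preceding lemma under the stronger hypothesis $\alpha>3/2$.
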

We now establish an important Lemma. We will use it to prove that
there is a limitation in our technique. Essentially the lower bound
of $k$ is linked to the number of squares in the problem. We have
\begin{lem}
Let $\beta+i\gamma$ run over the non-trivial zeros of the Riemann
zeta-function, let $N,\, d$ be positive integers, $\left\Vert .\right\Vert $
the euclidean norm in $\mathbb{R}^{d}$ and $k>0$ be a real number.
Then the series
\[
\sum_{\overline{l}\in\left(0,\infty\right)^{d}}\sum_{\gamma>0}\gamma^{-k-3/2}\int_{0}^{\gamma}e^{-N\left\Vert \overline{l}\right\Vert ^{2}v^{2}/\gamma^{2}}e^{-v}v^{k+\beta}dv,
\]
where
\[
\sum_{\overline{l}\in\left(0,\infty\right)^{d}}=\sum_{l_{1}\geq1}\sum_{l_{2}\geq1}\cdots\sum_{l_{d}\geq1},
\]
converges if $k>d-1/2$ and this result is optimal.\end{lem}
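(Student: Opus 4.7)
The plan is to collapse the double series into a single sum over the zeros, using the theta-function bound \eqref{eq:om2est} to absorb the sum over $\overline{l}$. Since every term is non-negative, Tonelli permits all rearrangements. I first change variables $v=\gamma u$ in the inner integral, which absorbs a factor $\gamma^{k+\beta+1}$ and rewrites the series as
\[
\sum_{\overline{l}\in(0,\infty)^{d}}\sum_{\gamma>0}\gamma^{\beta-1/2}\int_{0}^{1}e^{-N\left\Vert \overline{l}\right\Vert ^{2}u^{2}-\gamma u}u^{k+\beta}\,du.
\]
The $\overline{l}$-sum factors coordinatewise into $\omega_{2}(Nu^{2})^{d}$, and by \eqref{eq:om2est} this is $\ll(Nu^{2})^{-d/2}$ uniformly in $u>0$.

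Substituting this estimate and then setting $w=\gamma u$ dominates the whole expression by
\[
N^{-d/2}\sum_{\gamma>0}\gamma^{d-k-3/2}\int_{0}^{\gamma}w^{k+\beta-d}e^{-w}\,dw.
\]
Because $k>d-1/2$ and $0<\beta<1$, the exponent $k+\beta-d$ is strictly greater than $-1/2$, so the inner $w$-integral is uniformly bounded by $\Gamma(k+\beta-d+1)=O(1)$ (the Gamma factor stays in a compact interval of the positive reals as $\beta$ varies). The remaining single series $\sum_{\gamma>0}\gamma^{d-k-3/2}$ converges by the Riemann--von Mangoldt counting estimate $N(T)\sim(T\log T)/(2\pi)$ precisely when $d-k-3/2<-1$, i.e., when $k>d-1/2$.

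For optimality I run the same chain of estimates in reverse. The Jacobi modular relation \eqref{eq:thetafunct} gives the matching asymptotic $\omega_{2}(a)\sim\frac{1}{2}(\pi/a)^{1/2}$ as $a\to 0^{+}$, so the lower bound $\omega_{2}(Nu^{2})^{d}\gg(Nu^{2})^{-d/2}$ holds on some interval $0<u<u_{0}$ depending only on $N$. Restricting the integral to this interval, for those $\gamma$ satisfying $\gamma u_{0}>1$ the substitution $w=\gamma u$ yields a matching lower bound of size $N^{-d/2}\gamma^{d-k-3/2}$, uniformly in the tail of zeros; Riemann--von Mangoldt then makes $\sum_{\gamma>\gamma_{0}}\gamma^{d-k-3/2}$ diverge as soon as $k\leq d-1/2$, forcing divergence of the original double sum.

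The principal technical difficulty is in this optimality half: the truncation $u_{0}$ must be chosen narrow enough for the theta asymptotic to hold throughout $(0,u_{0})$, yet wide enough that $e^{-\gamma u}$ has not already damped the integrand for the $\gamma$-range used in the lower bound. Splitting the argument into the regimes $\gamma u_{0}\leq 1$ and $\gamma u_{0}>1$ and combining with the Riemann--von Mangoldt zero density should make this rigorous. By contrast the convergence half is essentially mechanical once \eqref{eq:om2est} is in place and Tonelli has been invoked, with the threshold $k>d-1/2$ arising entirely from the single-series convergence criterion $\sum_{\gamma}\gamma^{-\alpha}<\infty\iff\alpha>1$ applied to the collapsed sum.
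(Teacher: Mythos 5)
Your argument is correct, and it reaches the paper's conclusion by a cleaner route. The paper expands $\omega_{2}^{d}$ via the binomial identity $\omega_{2}^{d}=2^{-d}\sum_{m=0}^{d}\binom{d}{m}(-1)^{d-m}\theta_{3}^{m}$, applies the modular relation \eqref{eq:thetafunct} to each power $\theta_{3}^{m}$ separately, notes that $\theta_{3}$ is $\asymp1$ on the transformed range, and lands on a signed combination of series $\asymp\sum_{\gamma>0}\gamma^{m-k-3/2}$ for $m=0,\dots,d$, the worst case $m=d$ giving the threshold $k>d-1/2$. You bypass the expansion entirely: the upper bound $\omega_{2}(a)\ll a^{-1/2}$ of \eqref{eq:om2est} collapses the lattice sum in one step for convergence, and the matching lower bound $\omega_{2}(a)\gg a^{-1/2}$ near $a=0$ (again from modularity) handles optimality. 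Both proofs ultimately rest on the same fact --- the $\overline{l}$-sum behaves like $(N v^{2}/\gamma^{2})^{-d/2}$ --- and both reduce to the convergence criterion for $\sum_{\gamma}\gamma^{d-k-3/2}$ via Riemann--von Mangoldt. What your version buys is that all quantities stay non-negative throughout, so Tonelli justifies every rearrangement and the optimality claim is immediate; the paper's alternating decomposition leaves implicit the point that when several $\theta_{3}^{m}$-terms diverge their opposite signs cannot produce cancellation. Your optimality half is only sketched (you flag the choice of $u_{0}$ and the regime split $\gamma u_{0}\lessgtr1$ yourself), but the sketch is sound: with $u_{0}\asymp N^{-1/2}$ fixed, the inner integral $\int_{0}^{\gamma u_{0}}w^{k+\beta-d}e^{-w}\,dw$ is bounded below uniformly in $\beta\in(0,1)$ once $\gamma u_{0}\geq1$ (or is $+\infty$ when $k+\beta-d\leq-1$, which only helps), so the divergence of $\sum_{\gamma>1/u_{0}}\gamma^{d-k-3/2}$ for $k\leq d-1/2$ finishes the claim.
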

\begin{proof}
From (\ref{eq:omegaetheta}) we have that 
\[
\omega_{2}^{d}\left(z\right)=\frac{1}{2^{d}}\sum_{m=0}^{d}\dbinom{d}{m}\left(-1\right)^{d-m}\theta_{3}^{m}\left(z\right).
\]
Hence

\begin{align*}
I= & \sum_{\overline{l}\in\left(0,\infty\right)^{d}}\sum_{\gamma>0}\gamma^{-k-3/2}\int_{0}^{\gamma}e^{-N\left\Vert \overline{l}\right\Vert ^{2}v^{2}/\gamma^{2}}e^{-v}v^{k+\beta}dv\\
= & \sum_{\gamma>0}\gamma^{-k-3/2}\int_{0}^{\gamma}\omega_{2}^{d}\left(\frac{Nv^{2}}{\gamma^{2}}\right)e^{-v}v^{k+\beta}dv\\
= & \frac{1}{2^{d}}\sum_{m=0}^{d}\dbinom{d}{m}\left(-1\right)^{d-m}\sum_{\gamma>0}\gamma^{-k-3/2}\int_{0}^{\gamma}\theta_{3}^{m}\left(\frac{Nv^{2}}{\gamma^{2}}\right)e^{-v}v^{k+\beta}dv.
\end{align*}
Now, using the functional equation (\ref{eq:thetafunct}) we have
that

\begin{align*}
I= & \frac{1}{2^{d}}\sum_{m=0}^{d}\dbinom{d}{m}\left(-1\right)^{d-m}\frac{\pi^{m/2}}{N^{m/2}}\sum_{\gamma>0}\gamma^{m-k-3/2}\int_{0}^{\gamma}\theta_{3}^{m}\left(\frac{\pi^{2}\gamma^{2}}{Nv^{2}}\right)e^{-v}v^{k+\beta-m}dv\\
= & \frac{1}{2^{d}}\sum_{m=0}^{d}\dbinom{d}{m}\left(-1\right)^{d-m}\frac{\pi^{m/2}}{N^{m/2}}\sum_{\gamma>0}\gamma^{m-k-3/2}I_{\gamma,m},
\end{align*}
say. Now we claim that 
\[
\theta_{3}\left(\frac{\pi^{2}\gamma^{2}}{Nv^{2}}\right)\asymp1,
\]
where the notation $f\left(x\right)\asymp g\left(x\right)$ means
$g\left(x\right)\ll f\left(x\right)\ll g\left(x\right)$, since $\theta_{3}\left(x\right)$
is a continuous function in the interval $\left[\frac{\pi^{2}}{N},\infty\right)$
(i.e. the range of $1/v^{2}$) and 
\[
\lim_{x\rightarrow\infty}\theta_{3}\left(x\right)=1.
\]
So we have
\[
I_{\gamma,m}\asymp\sum_{\gamma>0}\gamma^{m-k-3/2}\int_{0}^{\gamma}e^{-v}v^{k+\beta-m}dv
\]
and now, assuming $k+\beta-m+1>0$, we get
\[
\int_{0}^{\gamma}e^{-v}v^{k+\beta-m}dv\asymp1.
\]
Hence
\[
I_{\gamma,m}\asymp_{k}\sum_{\gamma>0}\gamma^{m-k-3/2}
\]
and the last series converges if $k>m-1/2$. Since $m=0,\dots,d$
for a global convergence we must have $k>d-1/2$ and this result is
optimal.
\end{proof}
Let us introduce another lemma
\begin{lem}
Let $\rho=\beta+i\gamma$ run over the non-trivial zeros of the Riemann
zeta function, let $z=\frac{1}{N}+iy,$ $N>1$ natural number, $y\in\mathbb{R}$
and $\alpha>3/2$. We have
\[
\sum_{\rho}\left|\Gamma\left(\rho\right)\right|\int_{\left(1/N\right)}\left|e^{Nz}\right|\left|z^{-\rho}\right|\left|z\right|^{-\alpha}\left|dz\right|\ll_{\alpha}N^{\alpha}.
\]
\end{lem}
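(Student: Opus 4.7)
The plan is to estimate the integrand directly using Stirling's approximation and the identity (\ref{eq:modzcomplgen}), then split the integration line into pieces handled by Lemmas 3 and 4 respectively. On the contour $z = 1/N + iy$ one has $|e^{Nz}| = e = O(1)$, and (\ref{eq:modzcomplgen}) with $a = 1/N$ gives $|z^{-\rho}| = |z|^{-\beta} \exp(\gamma \arctan(yN))$, while (\ref{eq:1}) yields $|\Gamma(\rho)| \ll e^{-\pi|\gamma|/2} |\gamma|^{\beta-1/2}$ uniformly in $\beta \in (0,1)$. Hence the quantity to estimate is, up to a constant,
\[
\sum_{\rho} |\gamma|^{\beta-1/2} \int_{\mathbb{R}} \exp\!\left(\gamma \arctan(yN) - \tfrac{\pi}{2}|\gamma|\right) |z|^{-\alpha-\beta}\, dy.
\]

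Next I would split the $y$-integration into the three sets $\mathbb{Y}_1 = \{y : y\gamma \le 0\}$, $\mathbb{Y}_2 = \{y \in [-1/N, 1/N] : y\gamma > 0\}$, and $\mathbb{Y}_3 = \{y : |y| > 1/N,\ y\gamma > 0\}$. The contribution from $\mathbb{Y}_1 \cup \mathbb{Y}_2$ is immediately $\ll_{\alpha} N^{\alpha}$ by Lemma 4 applied with $a = 1/N$. For $\mathbb{Y}_3$, by the symmetry $y \mapsto -y$, $\rho \mapsto \overline{\rho}$, it suffices to handle $\gamma > 0$, $y > 1/N$, where $|z| \asymp y$. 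Using the complement identity $\arctan(yN) = \pi/2 - \arctan(1/(yN))$, the exponential combines with the Stirling factor to give $\exp(-\gamma \arctan(1/(yN)))$, and the substitution $u = yN$ turns the inner integral into
\[
N^{\alpha+\beta-1} \int_{1}^{\infty} \exp(-\gamma \arctan(1/u))\, u^{-\alpha-\beta}\, du.
\]
Since $\beta < 1$ and $N \ge 1$ we have $N^{\alpha+\beta-1} \le N^{\alpha}$, and summing over $\rho$ the resulting double sum/integral is exactly the one shown to converge in Lemma 3 for $\alpha > 3/2$. Adding this to the $\mathbb{Y}_1 \cup \mathbb{Y}_2$ contribution gives the claimed bound $\ll_{\alpha} N^{\alpha}$.

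The main obstacle, and the point where the hypothesis $\alpha > 3/2$ becomes essential, is the outer region $\mathbb{Y}_3$: there Lemma 4 does not apply, and one must carefully pair the Stirling decay $e^{-\pi|\gamma|/2}$ against the growth $e^{\gamma \arctan(y/a)}$ via the $\arctan$ complement identity in order to expose sufficient decay in $|\gamma|$. Once this cancellation is made explicit, the resulting object coincides with the one already treated in Lemma 3, and no further work is required.
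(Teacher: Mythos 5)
Your proof is correct and follows essentially the same route as the paper's: reduce via Stirling and the modulus identity to the sum $\sum_{\rho}|\gamma|^{\beta-1/2}\int\exp(\gamma\arctan(Ny)-\tfrac{\pi}{2}|\gamma|)|z|^{-\alpha-\beta}\,dy$, dispatch $\mathbb{Y}_{1}\cup\mathbb{Y}_{2}$ with Lemma 4, and handle the outer region via the $\arctan$ complement identity, the substitution $u=Ny$, and Lemma 3. Your explicit remark that $N^{\alpha+\beta-1}\le N^{\alpha}$ because $\beta<1$ is a small point the paper leaves implicit, but the argument is the same.
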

\begin{proof}
Put $a=\frac{1}{N}.$ Using the identity (\ref{eq:modzcomplgen})
and (\ref{eq:1}) we get that the left hand side in the statement
above is

\begin{equation}
\sum_{\rho}\left|\gamma\right|^{\beta-1/2}\int_{\mathbb{R}}\exp\left(\gamma\arctan\left(\frac{y}{a}\right)-\frac{\pi}{2}\left|\gamma\right|\right)\frac{dy}{\left|z\right|^{\alpha+\beta}}.\label{eq:lemma6}
\end{equation}
and so by Lemma 2.3 (\ref{eq:lemma6}) is $\ll_{\alpha}a^{-\alpha}$
in $\mathbb{Y}_{1}\cup\mathbb{Y}_{2}$. For the other part we can
see that
\[
\sum_{\rho}\gamma^{\beta-1/2}\int_{a}^{\infty}\exp\left(-\gamma\arctan\left(\frac{a}{y}\right)\right)\frac{dy}{\left|z\right|^{\alpha+\beta}}
\]
\[
=a^{-\alpha-\beta+1}\sum_{\rho}\gamma^{\beta-1/2}\int_{1}^{\infty}\exp\left(-\gamma\arctan\left(\frac{1}{u}\right)\right)\frac{dy}{u^{\alpha+\beta}}
\]
since 
\begin{equation}
\left|z\right|^{-1}\asymp\begin{cases}
a^{-1} & \left|y\right|\leq a,\\
\left|y\right|^{-1} & \left|y\right|\geq a,
\end{cases}\label{eq:modz-1}
\end{equation}
and so by Lemma 2.2 we have the convergence if $\alpha>3/2$.
\end{proof}

\section{Settings}

Using (\ref{eq:rq}), (\ref{eq:stild}) and (\ref{eq:om2}) it is
not hard to see that 
\[
\widetilde{S}\left(z\right)\omega_{2}^{2}\left(z\right)=\sum_{m_{1}\geq1}\sum_{m_{2}\geq1}\sum_{m_{3}\geq1}\Lambda\left(m_{1}\right)e^{-\left(m_{1}+m_{2}^{2}+m_{3}^{2}\right)z}=\sum_{n\geq1}r_{Q}\left(n\right)e^{-nz}.
\]
 Let $z=a+iy,\, a>0$ and let us consider
\[
\frac{1}{2\pi i}\int_{\left(a\right)}e^{Nz}z^{-k-1}\widetilde{S}\left(z\right)\omega_{2}^{2}\left(z\right)dz=\frac{1}{2\pi i}\int_{\left(a\right)}e^{Nz}z^{-k-1}\sum_{n\geq1}r_{Q}\left(n\right)e^{-nz}dz.
\]
Now we prove that we can exchange the integral with the series. From
(\ref{eq:om2est}) and the Prime Number Theorem in the form quoted
above we have
\[
\sum_{n\geq1}\left|r_{Q}\left(n\right)e^{-nz}\right|=\widetilde{S}\left(a\right)\omega_{2}^{2}\left(a\right)\ll a^{-2}
\]
hence

\begin{align*}
\int_{\left(a\right)}\left|e^{Nz}z^{-k-1}\right|\left|\widetilde{S}\left(z\right)\omega_{2}^{2}\left(z\right)\right|\left|dz\right|\ll & a^{-2}e^{Na}\left(\int_{-a}^{a}a^{-k-1}dy+2\int_{a}^{\infty}y^{-k-1}dy\right)\ll_{k}a^{-2-k}e^{Na}
\end{align*}
assuming $k>0$. So finally we have
\begin{equation}
\sum_{n\leq N}r_{Q}\left(n\right)\frac{\left(N-n\right)^{k}}{\Gamma\left(k+1\right)}=\frac{1}{2\pi i}\int_{\left(a\right)}e^{Nz}z^{-k-1}\widetilde{S}\left(z\right)\omega_{2}^{2}\left(z\right)dz.\label{eq:main}
\end{equation}
Now, using (\ref{eq:stilda}), we can write (\ref{eq:main}) as

\[
\sum_{n\leq N}r_{Q}\left(n\right)\frac{\left(N-n\right)^{k}}{\Gamma\left(k+1\right)}=\frac{1}{2\pi i}\int_{\left(a\right)}e^{Nz}z^{-k-1}\left(\frac{1}{z}-\sum_{\rho}z^{-\rho}\Gamma\left(\rho\right)\right)\omega_{2}^{2}\left(z\right)dz+
\]
\begin{equation}
+O\left(\int_{\left(a\right)}\left|e^{Nz}\right|\left|z\right|^{-k-1}\left|\omega_{2}^{2}\left(z\right)\right|\left|E\left(a,y\right)\right|\left|dz\right|\right)
\end{equation}
and the error term can be estimated, using Lemma 2.1, (\ref{eq:om2est})
and (\ref{eq:modz-1}) as 
\[
a^{-1}e^{Na}\left(\int_{-a}^{a}a^{-k-1}dy+\int_{a}^{\infty}y^{-k-1/2}\left(1+\log^{2}\left(y/a\right)\right)dy\right)\ll_{k}e^{Na}a^{-k-1}
\]
assuming $k>1/2.$ Hereafter we will consider $a=1/N$. We have 
\[
\sum_{n\leq N}r_{Q}\left(n\right)\frac{\left(N-n\right)^{k}}{\Gamma\left(k+1\right)}=\frac{1}{2\pi i}\int_{\left(1/N\right)}e^{Nz}z^{-k-1}\left(\frac{1}{z}-\sum_{\rho}z^{-\rho}\Gamma\left(\rho\right)\right)\omega_{2}^{2}\left(z\right)dz+O\left(N^{k+1}\right)
\]
and now, using the functional equation (\ref{eq:omfunct}), we get

\begin{align*}
\sum_{n\leq N}r_{Q}\left(n\right)\frac{\left(N-n\right)^{k}}{\Gamma\left(k+1\right)}= & \frac{1}{8\pi i}\int_{\left(1/N\right)}e^{Nz}z^{-k-1}\left(\frac{1}{z}-\sum_{\rho}z^{-\rho}\Gamma\left(\rho\right)\right)\left(\left(\frac{\pi}{z}\right)^{1/2}-1\right)^{2}dz\\
+ & \frac{1}{2\pi i}\int_{\left(1/N\right)}e^{Nz}z^{-k-1}\left(\frac{1}{z}-\sum_{\rho}z^{-\rho}\Gamma\left(\rho\right)\right)\frac{\pi}{z}\omega_{2}^{2}\left(\frac{\pi^{2}}{z}\right)dz\\
+ & \frac{1}{2\pi i}\int_{\left(1/N\right)}e^{Nz}z^{-k-1}\left(\frac{1}{z}-\sum_{\rho}z^{-\rho}\Gamma\left(\rho\right)\right)\left(\left(\frac{\pi}{z}\right)^{1/2}-1\right)\left(\left(\frac{\pi}{z}\right)^{1/2}\omega_{2}\left(\frac{\pi^{2}}{z}\right)\right)dz\\
+ & O\left(N^{k+1}\right)\\
= & I_{1}+I_{2}+I_{3}+O\left(N^{k+1}\right),
\end{align*}
say.

\section{Evaluation of $I_{1}$}

From $I_{1}$ we will find the main terms $M_{1}\left(N,k\right)$
and $M_{2}\left(N,k\right)$ of our asymptotic formulae. We have

\begin{align*}
I_{1}= & \frac{1}{8\pi i}\int_{\left(1/N\right)}e^{Nz}z^{-k-2}\left(\left(\frac{\pi}{z}\right)^{1/2}-1\right)^{2}dz\\
- & \frac{1}{8\pi i}\int_{\left(1/N\right)}e^{Nz}z^{-k-1}\sum_{\rho}z^{-\rho}\Gamma\left(\rho\right)\left(\left(\frac{\pi}{z}\right)^{1/2}-1\right)^{2}dz\\
= & I_{1,1}-I_{1,2},
\end{align*}
say. From $I_{1,1}$ we observe that 
\[
I_{1,1}=\frac{\pi}{8\pi i}\int_{\left(1/N\right)}e^{Nz}z^{-k-3}dz+\frac{1}{8\pi i}\int_{\left(1/N\right)}e^{Nz}z^{-k-2}dz-\frac{\pi^{1/2}}{4\pi i}\int_{\left(1/N\right)}e^{Nz}z^{-k-5/2}dz
\]
so, if we put $Nz=s,$ $ds=Ndz$ and use (\ref{eq:Lap}) we get immediately

\begin{align*}
I_{1,1}= & \frac{\pi}{4}\frac{N^{k+2}}{2\pi i}\int_{\left(1\right)}e^{s}s^{-k-3}ds+\frac{N^{k+1}}{4}\frac{1}{2\pi i}\int_{\left(1\right)}e^{s}s^{-k-2}ds-\frac{\pi}{2}\frac{N^{k+3/2}}{2\pi i}\int_{\left(1\right)}e^{s}s^{-k-5/2}ds\\
= & M_{1}\left(N,k\right).
\end{align*}
From $I_{1,2}$ we have

\begin{align*}
I_{1,2}= & \frac{\pi}{8\pi i}\int_{\left(1/N\right)}e^{Nz}z^{-k-2}\sum_{\rho}z^{-\rho}\Gamma\left(\rho\right)dz\\
+ & \frac{1}{8\pi i}\int_{\left(1/N\right)}e^{Nz}z^{-k-1}\sum_{\rho}z^{-\rho}\Gamma\left(\rho\right)dz\\
- & \frac{\pi^{1/2}}{4\pi i}\int_{\left(1/N\right)}e^{Nz}z^{-k-3/2}\sum_{\rho}z^{-\rho}\Gamma\left(\rho\right)dz\\
= & \mathcal{I}_{1}+\mathcal{I}_{2}-\mathcal{I}_{3},
\end{align*}
say. We observe that by Lemma 2.5 we have the absolute convergence of
these integrals if, respectively, we have $k>-1/2$, $k>1/2$ and
$k>0$. Hence for $k>1/2$ we have
\[
\mathcal{I}_{1}=\frac{\pi}{4}\sum_{\rho}\Gamma\left(\rho\right)\frac{1}{2\pi i}\int_{(1/N)}e^{Nz}z^{-k-2-\rho}dz=\frac{\pi}{4}\sum_{\rho}\frac{\Gamma\left(\rho\right)}{\Gamma\left(k+2+\rho\right)}N^{k+1+\rho}
\]
\[
\mathcal{I}_{2}=\frac{1}{4}\sum_{\rho}\Gamma\left(\rho\right)\frac{1}{2\pi i}\int_{(1/N)}e^{Nz}z^{-k-1-\rho}dz=\frac{1}{4}\sum_{\rho}\frac{\Gamma\left(\rho\right)}{\Gamma\left(k+1+\rho\right)}N^{k+\rho}
\]

\[
\mathcal{I}_{3}=\frac{\pi^{1/2}}{2}\sum_{\rho}\Gamma\left(\rho\right)\frac{1}{2\pi i}\int_{(1/N)}e^{Nz}z^{-k-3/2-\rho}dz=\frac{\pi^{1/2}}{2}\sum_{\rho}\frac{\Gamma\left(\rho\right)}{\Gamma\left(k+3/2+\rho\right)}N^{k+1/2+\rho}.
\]

\section{Evaluation of $I_{2}$}

We have

\begin{align*}
I_{2}= & \frac{\pi}{2\pi i}\int_{\left(1/N\right)}e^{Nz}z^{-k-3}\omega_{2}^{2}\left(\frac{\pi^{2}}{z}\right)dz\\
- & \frac{\pi}{2\pi i}\int_{\left(1/N\right)}e^{Nz}z^{-k-2}\sum_{\rho}z^{-\rho}\Gamma\left(\rho\right)\omega_{2}^{2}\left(\frac{\pi^{2}}{z}\right)dz\\
= & I_{2,1}-I_{2,2},
\end{align*}
say.

\subsection*{Evaluation of $\mathbf{I_{2,1}}$}

We have that 
\[
I_{2,1}:=\frac{\pi}{2\pi i}\int_{\left(1/N\right)}e^{Nz}z^{-k-3}\omega_{2}^{2}\left(\frac{\pi^{2}}{z}\right)dz=\frac{\pi}{2\pi i}\int_{\left(1/N\right)}e^{Nz}z^{-k-3}\left(\sum_{l_{1}\geq1}e^{-l_{1}^{2}\pi^{2}/z}\right)\left(\sum_{l_{2}\geq1}e^{-l_{2}^{2}\pi^{2}/z}\right)dz;
\]
so let us prove that we can exchange the integral with the series.
Let us consider
\[
A_{1}:=\sum_{l_{1}\geq1}\int_{\left(1/N\right)}\left|e^{Nz}\right|\left|z\right|^{-k-3}e^{-l_{1}^{2}\pi^{2}\textrm{Re}\left(1/z\right)}\left|\omega_{2}\left(\frac{\pi^{2}}{z}\right)\right|\left|dz\right|,
\]
say. From 
\begin{equation}
\textrm{Re}\left(1/z\right)=\frac{N}{1+N^{2}y^{2}}\gg\begin{cases}
N & \left|y\right|\leq1/N\\
1/\left(Ny^{2}\right) & \left|y\right|>1/N
\end{cases}\label{eq:re1/z}
\end{equation}
 we have 
\[
A_{1}\ll\sum_{l_{1}\geq1}\int_{0}^{1/N}\frac{e^{-l_{1}^{2}N}}{\left|z\right|^{k+3}}\omega_{2}\left(N\right)dy+N^{1/2}\sum_{l_{1}\geq1}\int_{1/N}^{\infty}\frac{ye^{-l_{1}^{2}/\left(Ny^{2}\right)}}{\left|z\right|^{k+3}}dy=U_{1}+U_{2}
\]
hence, recalling (\ref{eq:om2est}) and (\ref{eq:modz-1}), 
\[
U_{1}\ll N^{k+2}\omega_{2}^{2}\left(N\right)\ll N^{k+1}
\]
and from (\ref{eq:modz-1}) (with $a=1/N)$ we get 
\[
U_{2}\ll N^{1/2}\sum_{l_{1}\geq1}\int_{1/N}^{\infty}\frac{e^{-l_{1}^{2}/\left(Ny^{2}\right)}}{y^{k+2}}dy\ll N^{k/2+1}\sum_{l_{1}\geq1}\frac{1}{l_{1}^{k+1}}\int_{0}^{l_{1}^{2}N}u^{k/2-1/2}e^{-u}du
\]
\[
\leq\Gamma\left(\frac{k+1}{2}\right)N^{k/2+1}\sum_{l_{1}\geq1}\frac{1}{l_{1}^{k+1}}\ll_{k}N^{k/2+1}
\]
assuming $k>0.$ Now we have to study the convergence of 
\[
A_{2}:=\sum_{l_{1}\geq1}\sum_{l_{2}\geq1}\int_{\left(1/N\right)}\left|e^{Nz}\right|\left|z\right|^{-k-3}e^{-l_{1}^{2}\pi^{2}\textrm{Re}\left(1/z\right)}e^{-l_{2}^{2}\pi^{2}\textrm{Re}\left(1/z\right)}\left|dz\right|,
\]
say. Again from (\ref{eq:modz-1}) we have 
\[
A_{2}\ll\sum_{l_{1}\geq1}\sum_{l_{2}\geq1}\int_{0}^{1/N}\frac{e^{-\left(l_{1}^{2}+l_{2}^{2}\right)N}}{\left|z\right|^{k+3}}dy+\sum_{l_{1}\geq1}\sum_{l_{2}\geq1}\int_{1/N}^{\infty}\frac{e^{-\left(l_{1}^{2}+l_{2}^{2}\right)/\left(Ny^{2}\right)}}{\left|z\right|^{k+3}}dy
\]
\[
=V_{1}+V_{2},
\]
say. For $V_{1}$ we can repeat the same reasoning of $U_{1}$ thus
getting 
\[
V_{1}\ll N^{k+2}\omega_{2}^{2}\left(N\right)\ll N^{k+1}
\]
and for $V_{2}$, assuming $k>1$, we have
\[
V_{2}\ll\sum_{l_{1}\geq1}\sum_{l_{2}\geq1}\int_{1/N}^{\infty}\frac{e^{-\left(l_{1}^{2}+l_{2}^{2}\right)/\left(Ny^{2}\right)}}{y^{k+3}}dy\ll_{k}N^{k/2+1/2}.
\]
Then finally we have 
\[
I_{2,1}=\frac{\pi}{2\pi i}\sum_{l_{1}\geq1}\sum_{l_{2}\geq1}\int_{\left(1/N\right)}e^{Nz}z^{-k-3}e^{-\left(l_{1}^{2}+l_{2}^{2}\right)\pi^{2}/z}dz=N^{k+2}\pi\sum_{l_{1}\geq1}\sum_{l_{2}\geq1}\frac{1}{2\pi i}\int_{\left(1\right)}e^{s}s^{-k-3}e^{-\left(l_{1}^{2}+l_{2}^{2}\right)\pi^{2}N/s}ds
\]
 from which, recalling the definition of the Bessel functions (\ref{eq:bes})
we have, taking $u=2\pi\left(l_{1}^{2}+l_{2}^{2}\right)^{1/2}N^{1/2}$
and assuming $k>1$, that 
\[
I_{2,1}=\frac{N^{k/2+1}}{\pi^{k+1}}\sum_{l_{1}\geq1}\sum_{l_{2}\geq1}\frac{J_{k+2}\left(2\pi\left(l_{1}^{2}+l_{2}^{2}\right)^{1/2}N^{1/2}\right)}{\left(l_{1}^{2}+l_{2}^{2}\right)^{k/2+1}}.
\]

\subsection*{Evaluation of $\mathbf{I_{2,2}}$}

We have to calculate 
\[
I_{2,2}:=\frac{\pi}{2\pi i}\int_{\left(1/N\right)}e^{Nz}z^{-k-2}\sum_{\rho}z^{-\rho}\Gamma\left(\rho\right)\left(\sum_{l_{1}\geq1}e^{-l_{1}^{2}\pi^{2}/z}\right)\left(\sum_{l_{2}\geq1}e^{-l_{2}^{2}\pi^{2}/z}\right)dz
\]
and again we have to prove that is possible to exchange the integral
with the series. So let us consider
\[
A_{3}:=\sum_{l_{1}\geq1}\int_{\left(1/N\right)}\left|e^{Nz}\right|\left|z^{-k-2}\right|\left|\sum_{\rho}z^{-\rho}\Gamma\left(\rho\right)\right|e^{-l_{1}^{2}\pi^{2}\textrm{Re}\left(1/z\right)}\left|\omega_{2}\left(\frac{\pi^{2}}{z}\right)\right|\left|dz\right|,
\]
say. Now using (\ref{eq:lemmaspec}) and (\ref{eq:om2est}) we have
\[
A_{3}\ll N^{1/2}\sum_{l_{1}\geq1}\int_{0}^{1/N}\frac{e^{-l_{1}^{2}N}}{\left|z\right|^{k+2}}dy+N^{3/2}\sum_{l_{1}\geq1}\int_{1/N}^{\infty}\frac{ye^{-l_{1}^{2}/\left(Ny^{2}\right)}}{\left|z\right|^{k+2}}dy+N^{1/2}\sum_{l_{1}\geq1}\int_{1/N}^{\infty}y\log^{2}\left(2Ny\right)\frac{e^{-l_{1}^{2}/\left(Ny^{2}\right)}}{\left|z\right|^{k+3/2}}dy
\]
\[
=W_{1}+W_{2}+W_{3},
\]
say. For $W_{1}$ and $W_{2}$ we can easily see that
\[
W_{1}\ll N^{k+3/2}\omega_{2}\left(N\right)\ll N^{k+1}
\]
and, taking $u=l_{1}^{2}/\left(Ny^{2}\right),$ we obtain 
\[
W_{2}\ll N^{3/2}\sum_{l_{1}\geq1}\int_{1/N}^{\infty}\frac{e^{-l_{1}^{2}/\left(Ny^{2}\right)}}{y^{k+1}}dy
\]
\[
\ll N^{k/2+3/2}\sum_{l_{1}\geq1}\frac{1}{l_{1}^{k}}\int_{0}^{l_{1}^{2}N}e^{-u}u^{k/2-1}du\ll_{k}N^{k/2+3/2}
\]
assuming $k>1.$ We have now to check $W_{3}$. Taking again $u=l_{1}^{2}/\left(Ny^{2}\right)$
we have, assuming $k>3/2$, that
\begin{align*}
W_{3}\ll & N^{k/2-1/4}\sum_{l_{1}\geq1}\frac{1}{l_{1}^{k-1/2}}\int_{0}^{l_{1}^{2}N}\log^{2}\left(\frac{4Nl_{1}^{2}}{u}\right)e^{-u}u^{k/2-5/4}du\\
\ll & N^{k/2-1/4}\sum_{l_{1}\geq1}\frac{1}{l_{1}^{k-1/2}}\ll_{k}N^{k/2}.
\end{align*}
Let us consider 
\[
A_{4}:=\sum_{l_{1}\geq1}\sum_{l_{2}\geq2}\int_{\left(1/N\right)}\left|e^{Nz}\right|\left|z^{-k-2}\right|\left|\sum_{\rho}z^{-\rho}\Gamma\left(\rho\right)\right|e^{-l_{1}^{2}\pi^{2}\textrm{Re}\left(1/z\right)}e^{-l_{2}^{2}\pi^{2}\textrm{Re}\left(1/z\right)}\left|dz\right|,
\]
say. By (\ref{eq:lemmaspec}) we get 
\begin{align*}
A_{4}\ll & N\sum_{l_{1}\geq1}\sum_{l_{2}\geq2}\int_{0}^{1/N}\frac{e^{-\left(l_{1}^{2}+l_{2}^{2}\right)N}}{\left|z\right|^{k+2}}dy+\sum_{l_{1}\geq1}\sum_{l_{2}\geq2}\int_{1/N}^{\infty}\frac{e^{-\left(l_{1}^{2}+l_{2}^{2}\right)/\left(Ny^{2}\right)}}{\left|z\right|^{k+2}}dy\\
+ & \sum_{l_{1}\geq1}\sum_{l_{2}\geq1}\int_{1/N}^{\infty}\log^{2}\left(2Ny\right)\frac{e^{-\left(l_{1}^{2}+l_{2}^{2}\right)/\left(Ny^{2}\right)}}{\left|z\right|^{k+3/2}}dy\\
= & R_{1}+R_{2}+R_{3},
\end{align*}
say. So we have immediately 
\[
R_{1}\ll N^{k+2}\omega^{2}\left(N\right)\ll N^{k+1}
\]
and, if we take $u=\left(l_{1}^{2}+l_{2}^{2}\right)/\left(Ny^{2}\right),$
we obtain
\begin{align*}
R_{2}\ll & \sum_{l_{1}\geq1}\sum_{l_{2}\geq1}\int_{1/N}^{\infty}\frac{e^{-\left(l_{1}^{2}+l_{2}^{2}\right)/\left(Ny^{2}\right)}}{y^{k+2}}dy\ll_{k}N^{(k+1)/2}
\end{align*}
for $k>1$. So it remains to evaluate $R_{3}$. Again we take $u=\left(l_{1}^{2}+l_{2}^{2}\right)/\left(Ny^{2}\right)$
and we have

\[
R_{3}\ll N^{k/2+1/4}\sum_{l_{1}\geq1}\sum_{l_{2}\geq1}\frac{\log^{2}\left(4N\left(l_{1}^{2}+l_{2}^{2}\right)\right)}{\left(l_{1}^{2}+l_{2}^{2}\right)^{k/2+1/4}}\int_{0}^{\left(l_{1}^{2}+l_{2}^{2}\right)^{1/2}N}e^{-u}u^{k/2-3/4}du
\]
\[
-N^{k/2+1/4}\sum_{l_{1}\geq1}\sum_{l_{2}\geq1}\frac{1}{\left(l_{1}^{2}+l_{2}^{2}\right)^{k/2+1/4}}\int_{0}^{\left(l_{1}^{2}+l_{2}^{2}\right)^{1/2}N}\log^{2}\left(u\right)e^{-u}u^{k/2-3/4}du
\]
and the convergence follows if $k>3/2.$ Note that the estimation
of $R_{3}$ is optimal. For proving it, take $c=\left(l_{1}^{2}+l_{2}^{2}\right)/N$,
assume $k\leq3/2$ and $y>1$. We have
\[
S:=\sum_{l_{1}\geq1}\sum_{l_{2}\geq1}\int_{1/N}^{\infty}\log^{2}\left(2Ny\right)\frac{e^{-c/y^{2}}}{y^{k+3/2}}dy\geq\sum_{l_{1}\geq1}\sum_{l_{2}\geq1}\int_{1}^{\infty}\log^{2}\left(2Ny\right)\frac{e^{-c/y^{2}}}{y^{k+3/2}}dy.
\]
Now, since $y\geq1$ we have $\log^{2}\left(2Ny\right)\geq\log^{2}\left(2N\right)$
and since $k\leq3/2$, we have 
\[
S\geq\log\left(2N\right)\sum_{l_{1}\geq1}\sum_{l_{2}\geq1}\int_{1}^{\infty}\frac{e^{-c/y^{2}}}{y^{k+3/2}}dy\geq\log\left(2N\right)\sum_{l_{1}\geq1}\sum_{l_{2}\geq1}\int_{1}^{\infty}\frac{e^{-c/y^{2}}}{y^{3}}dy
\]
\[
=\log\left(2N\right)\sum_{l_{1}\geq1}\sum_{l_{2}\geq1}\frac{1}{2c}\left(1-e^{-c}\right)\geq\frac{N\log\left(2N\right)\left(1-e^{-2/N}\right)}{2}\sum_{l_{1}\geq1}\sum_{l_{2}\geq1}\frac{1}{l_{1}^{2}+l_{2}^{2}}.
\]
The last double series diverges since 
\[
\sum_{l_{1}\geq1}\sum_{l_{2}\geq1}\frac{1}{l_{1}^{2}+l_{2}^{2}}\geq\sum_{l_{1}\geq1}\sum_{1\leq l_{2}\leq l_{1}}\frac{1}{l_{1}^{2}+l_{2}^{2}}\geq\frac{1}{2}\sum_{l_{1}\geq1}\frac{1}{l_{1}}.
\]
 Now we have to estimate 
\[
A_{5}:=\sum_{l_{1}\geq1}\sum_{l_{2}\geq1}\sum_{\rho}\left|\Gamma\left(\rho\right)\right|\int_{\left(1/N\right)}\left|e^{Nz}\right|\left|z^{-k-2}\right|\left|z^{-\rho}\right|e^{-l_{1}^{2}\pi^{2}\textrm{Re}\left(1/z\right)}e^{-l_{2}^{2}\pi^{2}\textrm{Re}\left(1/z\right)}\left|dz\right|,
\]
say. Using (\ref{eq:1}) and (\ref{eq:modzcomplgen}) we have 
\[
A_{5}\ll\sum_{l_{1}\geq1}\sum_{l_{2}\geq1}\sum_{\rho,\gamma>0}e^{-\pi\gamma/2}\gamma^{\beta-1/2}\int_{1/N}^{\infty}\left|z\right|^{-k-2}\left|z\right|^{-\beta}\exp\left(\gamma\arctan\left(Ny\right)\right)e^{-l_{1}^{2}\pi^{2}\textrm{Re}\left(1/z\right)}e^{-l_{2}^{2}\pi^{2}\textrm{Re}\left(1/z\right)}\left|dz\right|.
\]
 Let $Q_{k}=\sup_{\beta}\left\{ \Gamma\left(\frac{k}{2}+\frac{\beta}{2}+\frac{1}{2}\right)\right\} $
and assume $y<0$. Using the trivial bound $\gamma\arctan\left(Ny\right)-\gamma\frac{\pi}{2}\leq-\gamma\frac{\pi}{2},$
we have
\begin{align}
A_{5}\ll & N^{k+1}\sum_{l_{1}\geq1}e^{-l_{1}^{2}N}\sum_{l_{2}\geq1}e^{-l_{2}^{2}N}\sum_{\rho,\gamma>0}N^{\beta}e^{-\pi\gamma/2}\gamma^{\beta-1/2}\nonumber \\
+ & N^{\left(k+1\right)/2}Q_{k}\sum_{l_{1}\geq1}\sum_{l_{2}\geq1}\frac{1}{\left(l_{1}^{2}+l_{2}^{2}\right)^{\left(k+1\right)/2}}\sum_{\rho,\gamma>0}N^{\beta}\frac{e^{-\pi\gamma/2}\gamma^{\beta-1/2}}{\left(l_{1}^{2}+l_{2}^{2}\right)^{\beta}}\ll_{k}N^{k}.\label{eq:imaginarypartzeta-1}
\end{align}
If $y>0$ we have
\begin{align*}
A_{5}\ll & \sum_{l_{1}\geq1}\sum_{l_{2}\geq1}\sum_{\rho:\gamma>0}e^{-\pi\gamma/2}\gamma^{\beta-1/2}\int_{0}^{1/N}N^{k+2+\beta}e^{-\left(l_{1}^{2}+l_{2}^{2}\right)N}dy\\
+ & \sum_{l_{1}\geq1}\sum_{l_{2}\geq1}\sum_{\rho:\gamma>0}\gamma^{\beta-1/2}\int_{1/N}^{\infty}\exp\left(\gamma\left(\arctan\left(Ny\right)-\frac{\pi}{2}\right)\right)\frac{e^{-\left(l_{1}^{2}+l_{2}^{2}\right)/\left(Ny^{2}\right)}}{y^{k+2+\beta}}dy
\end{align*}
 and by a well-known trigonometric identity follows that
\begin{align*}
A_{5}\ll & N^{k+1}+\sum_{l_{1}\geq1}\sum_{l_{2}\geq1}\sum_{\rho:\gamma>0}\gamma^{\beta-1/2}\int_{1/N}^{\infty}\exp\left(-\gamma\arctan\left(\frac{1}{Ny}\right)\right)\frac{e^{-\left(l_{1}^{2}+l_{2}^{2}\right)/\left(Ny^{2}\right)}}{y^{k+2+\beta}}dy\\
\ll & N^{k+1}+\sum_{l_{1}\geq1}\sum_{l_{2}\geq1}\sum_{\rho:\gamma>0}\gamma^{\beta-1/2}\int_{1/N}^{\infty}\exp\left(-\frac{\gamma}{Ny}-\frac{l_{1}^{2}+l_{2}^{2}}{Ny^{2}}\right)y^{-k-2-\beta}dy
\end{align*}
 and if we put $\frac{\gamma}{Ny}=v$ we get 

\begin{align}
A_{5}\ll & N^{k+1}+\sum_{l_{1}\geq1}\sum_{l_{2}\geq1}\sum_{\rho:\gamma>0}\gamma^{\beta-1/2}\int_{0}^{\gamma}e^{-v}e^{-\left(Nv^{2}\left(l_{1}^{2}+l_{2}^{2}\right)/\gamma^{2}\right)}\left(\frac{\gamma}{Nv}\right)^{-k-2-\beta}\frac{\gamma}{Nv^{2}}dv\nonumber \\
\ll & N^{k+1}+\sum_{l_{1}\geq1}\sum_{l_{2}\geq1}\sum_{\rho:\gamma>0}\gamma^{-k-3/2}\int_{0}^{\infty}e^{-v}e^{-\left(Nv^{2}\left(l_{1}^{2}+l_{2}^{2}\right)/\gamma^{2}\right)}v^{k+\beta}dv.\label{eq:k>3/2}
\end{align}
Now we can observe that we are in the situation of Lemma 2.4 with $d=2$
and so we can conclude immediately that we have the convergence for
$k>3/2$ and this result is optimal.

We studied the convergence, so we finally have, using again the identity
(\ref{eq:bes}), that

\[
I_{2,2}=\pi^{-k}N^{k/2+1/2}\sum_{\rho}\frac{\Gamma\left(\rho\right)}{\pi^{\rho}}N^{\rho/2}\sum_{l_{1}\geq1}\sum_{l_{2}\geq1}\frac{J_{k+1+\rho}\left(2\pi\left(l_{1}^{2}+l_{2}^{2}\right)^{1/2}N^{1/2}\right)}{\left(l_{1}^{2}+l_{2}^{2}\right)^{\left(k+1+\rho\right)/2}}.
\]

\section{Evaluation of $I_{3}$}

We have 
\begin{align*}
I_{3}= & \frac{1}{2\pi i}\int_{\left(1/N\right)}e^{Nz}z^{-k-1}\left(\frac{\pi^{1/2}}{z^{3/2}}-\left(\frac{\pi}{z}\right)^{1/2}\sum_{\rho}z^{-\rho}\Gamma\left(\rho\right)-\frac{1}{z}+\sum_{\rho}z^{-\rho}\Gamma\left(\rho\right)\right)\left(\left(\frac{\pi}{z}\right)^{1/2}\omega_{2}\left(\frac{\pi^{2}}{z}\right)\right)dz\\
= & \frac{1}{2i}\int_{\left(1/N\right)}e^{Nz}z^{-k-3}\omega_{2}\left(\frac{\pi^{2}}{z}\right)dz-\frac{1}{2i}\int_{\left(1/N\right)}e^{Nz}z^{-k-2}\sum_{\rho}z^{-\rho}\Gamma\left(\rho\right)\omega_{2}\left(\frac{\pi^{2}}{z}\right)dz\\
- & \frac{1}{2\pi^{1/2}i}\int_{\left(1/N\right)}e^{Nz}z^{-k-5/2}\omega_{2}\left(\frac{\pi^{2}}{z}\right)+\frac{1}{2\pi^{1/2}i}\int_{\left(1/N\right)}e^{Nz}z^{-k-3/2}\sum_{\rho}z^{-\rho}\Gamma\left(\rho\right)\omega_{2}\left(\frac{\pi^{2}}{z}\right)dz\\
= & I_{3,1}-I_{3,2}-I_{3,3}+I_{3,4}.
\end{align*}

\subsection*{Evaluation of $\mathbf{I_{3,1}}$}

We have
\[
I_{3,1}:=\frac{1}{2i}\int_{\left(1/N\right)}e^{Nz}z^{-k-3}\omega_{2}\left(\frac{\pi^{2}}{z}\right)dz=\frac{1}{2i}\int_{\left(1/N\right)}e^{Nz}z^{-k-3}\sum_{m\geq1}e^{-m^{2}\pi^{2}/z}dz
\]
hence we have to establish the convergence of 
\[
A_{6}:=\sum_{m\geq1}\int_{\left(1/N\right)}\left|e^{Nz}\right|\left|z\right|^{-k-3}e^{-m^{2}\textrm{Re}\left(1/z\right)}\left|dz\right|,
\]
say. Using (\ref{eq:om2est}), (\ref{eq:modz-1}) and (\ref{eq:re1/z})
we have 

\begin{align}
A_{6}\ll & N^{k+3/2}+\sum_{m\geq1}\int_{0}^{\infty}y^{-k-3}e^{-m^{2}/\left(Ny^{2}\right)}dy\ll_{k}N^{k+3/2}\label{eq:ultima}
\end{align}
for $k>-1.$ So we obtain, recalling (\ref{eq:bes}), that
\[
J_{3,1}=\frac{N^{k/2+1}}{\pi^{k+1}}\sum_{m\geq1}\frac{J_{k+2}\left(2m\pi N^{1/2}\right)}{m^{k+2}}.
\]

\subsection*{Evaluation of $\mathbf{I_{3,3}}$}

We have 
\[
I_{3,3}:=\frac{1}{2\pi^{1/2}i}\int_{\left(1/N\right)}e^{Nz}z^{-k-5/2}\sum_{m\geq1}e^{-m^{2}\pi^{2}/z}dz
\]
so we have to establish the convergence of
\[
\sum_{m\geq1}\int_{\left(1/N\right)}\left|e^{Nz}\right|\left|z\right|^{-k-5/2}e^{-m^{2}\textrm{Re}\left(1/z\right)}\left|dz\right|.
\]
Arguing as for $I_{3,1},$ we have the convergence for $k>-1/2$.
Summing up, we obtain

\begin{align*}
I_{3,3}= & \frac{N^{k/2+3/4}}{\pi^{k+1}}\sum_{m\geq1}\frac{J_{k+3/2}\left(2m\pi N^{1/2}\right)}{m^{k+3/2}}.
\end{align*}

\subsection*{Evaluation of $\mathbf{I_{3,2}}$}

We have to establish the convergence of
\[
A_{7}:=\sum_{m\geq1}\int_{\left(1/N\right)}\left|e^{Nz}\right|\left|z^{-k-2}\right|\left|\sum_{\rho}z^{-\rho}\Gamma\left(\rho\right)\right|\left|e^{-m^{2}\pi^{2}/z}\right|\left|dz\right|,
\]
say. Using (\ref{eq:om2est}), (\ref{eq:modz-1}), (\ref{eq:re1/z})
and (\ref{eq:lemmaspec}) we get 

\begin{align*}
A_{7}\ll & N^{k+1/2}+N\sum_{m\geq1}\int_{1/N}^{\infty}y^{-k-2}e^{-m^{2}/\left(Ny^{2}\right)}dy\\
+ & \log^{2}\left(2N\right)\sum_{m\geq1}\int_{1/N}^{\infty}y^{-k-3/2}e^{-m^{2}/\left(Ny^{2}\right)}dy\\
+ & \sum_{m\geq1}\int_{1/N}^{\infty}\log^{2}\left(y\right)y^{-k-3/2}e^{-m^{2}/\left(Ny^{2}\right)}dy.
\end{align*}
Now if we put $m^{2}/\left(Ny^{2}\right)=u$ we have

\begin{align*}
N\sum_{m\geq1}\int_{1/N}^{\infty}y^{-k-2}e^{-m^{2}/\left(Ny^{2}\right)}dy\ll & N^{k/2+3/2}\Gamma\left(\frac{k+1}{2}\right)\sum_{m\geq1}m^{-k-1}
\end{align*}
which converges if $k>0$. With the same substitution we get

\begin{align*}
\log^{2}\left(2N\right)\sum_{m\geq1}\int_{1/N}^{\infty}y^{-k-3/2}e^{-m^{2}/\left(Ny^{2}\right)}dy\ll & \log^{2}\left(2N\right)N^{k/2+1/4}\Gamma\left(\frac{k}{2}+\frac{1}{4}\right)\sum_{m\geq1}m^{-k-1/2}
\end{align*}
which converges for $k>1/2$. For the estimation of the last integral
in the bound of $A_{7}$ we observe that if we take $\epsilon>0$
we have 
\[
\sum_{m\geq1}\int_{1/N}^{\infty}\log^{2}\left(y\right)y^{-k-3/2}e^{-m^{2}/\left(Ny^{2}\right)}dy\ll\sum_{m\geq1}\int_{1/N}^{\infty}y^{-k-3/2+\epsilon}e^{-m^{2}/\left(Ny^{2}\right)}dy
\]
 and so, arguing analogously as we did for (\ref{eq:ultima}), we
get
\[
\ll N^{k/2+1/4-\epsilon/2}\Gamma\left(\frac{k}{2}+\frac{1}{4}-\frac{\epsilon}{2}\right)\sum_{m\geq1}m^{-k-1/2+\epsilon}
\]
and for the arbitrariness of $\epsilon$ we have the convergence for
$k>1/2$. We have now to study
\[
A_{8}:=\sum_{m\geq1}\sum_{\rho}\left|\Gamma\left(\rho\right)\right|\int_{\left(1/N\right)}\left|e^{Nz}\right|\left|z^{-k-2}\right|\left|z^{-\rho}\right|\left|e^{-m^{2}\pi^{2}/z}\right|\left|dz\right|,
\]
say. By symmetry we may assume that $\gamma>0$. If $y\leq0$ we have
$\gamma\arctan\left(y/a\right)-\frac{\pi}{2}\gamma\leq-\frac{\pi}{2}\gamma$
and so using (\ref{eq:1}) and (\ref{eq:modzcomplgen}) we get
\[
A_{8}\ll\sum_{m\geq1}\sum_{\gamma>0}\gamma^{\beta-1/2}\exp\left(-\frac{\pi}{2}\gamma\right)\left(\int_{-1/N}^{0}N^{k+2+\beta}e^{-m^{2}N}dy+\int_{-\infty}^{-1/N}\frac{e^{-m^{2}/\left(Ny^{2}\right)}}{\left|y\right|^{k+2+\beta}}dy\right)
\]
\[
\ll_{k}N^{k+3/2}+N^{k/2+1/2}Q_{k}\sum_{m\geq1}\frac{1}{m^{k+1}}\sum_{\gamma>0}N^{\beta/2}\frac{\gamma^{\beta-1/2}}{m^{\beta}}\exp\left(-\frac{\pi}{2}\gamma\right)\ll_{k}N^{k+3/2}
\]
provided that $k>0$ and $Q_{k}=\sup_{\beta}\left\{ \Gamma\left(\frac{k}{2}+\frac{1}{2}+\frac{\beta}{2}\right)\right\} .$
Let $y>0$. We have

\begin{align*}
A_{8}\ll & \sum_{m\geq1}\sum_{\gamma>0}\gamma^{\beta-1/2}\exp\left(-\frac{\pi}{4}\gamma\right)\int_{0}^{1/N}N^{k+2+\beta}e^{-m^{2}N}dy\\
+ & \sum_{m\geq1}\sum_{\gamma>0}\gamma^{\beta-1/2}\int_{1/N}^{\infty}\exp\left(\gamma\arctan\left(Ny\right)-\frac{\pi}{2}\gamma\right)\frac{e^{-m^{2}/\left(Ny^{2}\right)}}{y^{k+2+\beta}}dy\\
= & L_{1}+L_{2},
\end{align*}
say. From (\ref{eq:om2est}) and (\ref{eq:modz-1}) we have 
\[
L_{1}\ll N^{k+1}\sum_{m\geq1}e^{-m^{2}N}\sum_{\gamma>0}N^{\beta}\gamma^{\beta-1/2}\exp\left(-\frac{\pi}{4}\gamma\right)\ll_{k}N^{k+3/2}
\]
and again by a well-known trigonometric identity and taking $v=m/\left(N^{1/2}y\right)$
we have
\begin{align*}
L_{2}\ll & \sum_{m\geq1}\sum_{\gamma>0}\gamma^{\beta-1/2}\int_{1/N}^{\infty}\exp\left(-\frac{\gamma}{Ny}-\frac{m^{2}}{Ny^{2}}\right)\frac{dy}{y^{k+2+\beta}}\\
= & N^{\left(k+1\right)/2}\sum_{m\geq1}\frac{1}{m^{k+1}}\sum_{\gamma>0}\frac{N^{\beta/2}}{m^{\beta}}\gamma^{\beta-1/2}\int_{0}^{m\sqrt{N}}\exp\left(-\frac{\gamma v}{N^{1/2}m}-v^{2}\right)v^{k+\beta}dv.
\end{align*}
 Using $e^{-v^{2}}v^{k}=O_{k}\left(1\right)$ if $k>0$, we have,
taking $s=\gamma v/\left(N^{1/2}m\right)$, that

\begin{align*}
\ll & N^{k/2+1}\sum_{m\geq1}\frac{1}{m^{k}}\sum_{\gamma>0}N^{\beta}\gamma^{-3/2}\int_{0}^{\infty}\exp\left(-s\right)s^{\beta}ds\ll_{k}N^{k/2+2}
\end{align*}
for $k>1.$ Now we can exchange the series with the integral and so
we have

\begin{align*}
I_{3,2}= & \pi^{-k}N^{\left(k+1\right)/2}\sum_{\rho}\pi^{-\rho}N^{\rho/2}\Gamma\left(\rho\right)\sum_{m\geq1}\frac{J_{k+1+\rho}\left(2m\pi\sqrt{N}\right)}{m^{k+1+\rho}}.
\end{align*}

\subsection*{Evaluation of $\mathbf{I_{3,4}}$}

We have to establish the convergence of 
\[
I_{3,4}:=\frac{1}{2\pi^{1/2}i}\int_{\left(1/N\right)}e^{Nz}z^{-k-3/2}\sum_{\rho}z^{-\rho}\Gamma\left(\rho\right)\omega_{2}\left(\frac{\pi^{2}}{z}\right)dz.
\]
Arguing analogously as we did for estimating $I_{3,2}$ we obtain
the condition $k>1$. We can exchange the series with the integral
and obtain

\begin{align*}
I_{3,4}= & \pi^{-k}N^{k/2+1/4}\sum_{\rho}\pi^{-\rho}N^{\rho}\Gamma\left(\rho\right)\sum_{m\geq1}\frac{J_{k+1/2+\rho}\left(2m\pi\sqrt{N}\right)}{m^{k+1/2+\rho}}.
\end{align*}
Defining \begin{align}
M_{1}\left(N,k\right)= & \frac{\pi N^{k+2}}{4\Gamma\left(k+3\right)}+\frac{N^{k+1}}{4\Gamma\left(k+2\right)}-\frac{\pi^{1/2}N^{k+3/2}}{2\Gamma\left(k+5/2\right)},\label{eq:M1}\\
M_{2}\left(N,k\right)= & -\frac{\pi}{4}\sum_{\rho}\frac{\Gamma\left(\rho\right)}{\Gamma\left(k+2+\rho\right)}N^{k+1+\rho}-\frac{1}{4}\sum_{\rho}\frac{\Gamma\left(\rho\right)}{\Gamma\left(k+1+\rho\right)}N^{k+\rho}\nonumber \\
+ & \frac{\pi^{1/2}}{2}\sum_{\rho}\frac{\Gamma\left(\rho\right)}{\Gamma\left(k+3/2+\rho\right)}N^{k+1/2+\rho},\label{eq:M2}\\
M_{3}\left(N,k\right)= & \frac{N^{k/2+1}}{\pi^{k+1}}\sum_{l_{1}\geq1}\sum_{l_{2}\geq1}\frac{J_{k+2}\left(2\pi\left(l_{1}^{2}+l_{2}^{2}\right)^{1/2}N^{1/2}\right)}{\left(l_{1}^{2}+l_{2}^{2}\right)^{k/2+1}}\nonumber \\
- & \pi^{-k}N^{k/2+1/2}\sum_{\rho}\frac{\Gamma\left(\rho\right)}{\pi^{\rho}}N^{\rho/2}\sum_{l_{1}\geq1}\sum_{l_{2}\geq1}\frac{J_{k+1+\rho}\left(2\pi\left(l_{1}^{2}+l_{2}^{2}\right)^{1/2}N^{1/2}\right)}{\left(l_{1}^{2}+l_{2}^{2}\right)^{\left(k+1+\rho\right)/2}},\label{eq:M3}\\
M_{4}\left(N,k\right)= & \frac{N^{k/2+1}}{\pi^{k+1}}\sum_{m\geq1}\frac{J_{k+2}\left(2m\pi N^{1/2}\right)}{m^{k+2}}-\frac{N^{k/2+3/4}}{\pi^{k+1}}\sum_{m\geq1}\frac{J_{k+3/2}\left(2m\pi N^{1/2}\right)}{m^{k+3/2}}\nonumber \\
- & \pi^{-k}N^{\left(k+1\right)/2}\sum_{\rho}\pi^{-\rho}N^{\rho/2}\Gamma\left(\rho\right)\sum_{m\geq1}\frac{J_{k+1+\rho}\left(2m\pi\sqrt{N}\right)}{m^{k+1+\rho}}\nonumber \\
+ & \pi^{-k}N^{k/2+1/4}\sum_{\rho}\pi^{-\rho}N^{\rho/2}\Gamma\left(\rho\right)\sum_{m\geq1}\frac{J_{k+1/2+\rho}\left(2m\pi\sqrt{N}\right)}{m^{k+1/2+\rho}},\label{eq:M4}
\end{align}
we have proved the following
\begin{mainthm}
Let $N$ be a sufficient large integer. We have
\[
\sum_{n\leq N}r_{Q}\left(n\right)\frac{\left(N-n\right)^{k}}{\Gamma\left(k+1\right)}=M_{1}\left(N,k\right)+M_{2}\left(N,k\right)+M_{3}\left(N,k\right)+M_{4}\left(N,k\right)+O\left(N^{k+1}\right)
\]
for $k>3/2$, where $\rho$ runs over the non-trivial zeros of the
Riemann zeta function $\zeta\left(s\right)$ and $J_{v}\left(u\right)$
is the Bessel function of complex order $v$ and real argument $u$.
Furthermore the bound $k>3/2$ is optimal using this technique.
\end{mainthm}

\subsection*{Acknowledgements}
I thank my advisor A. Zaccagnini and A. Languasco for their contributions and the conversations on this topic and Lior Silberman of Mathoverflow.net for his precious ideas for Lemma 2.4. This work is part of the Author's Ph.D. thesis.

\end{document}